\definecolor{mred}{rgb}{0.7,0,0}
\definecolor{mblue}{RGB}{034,113,179}
\numberwithin{equation}{section}
\newtheorem{Theorem}{Theorem}[section]
\newtheorem{Lemma}[Theorem]{Lemma}
\newtheorem{Proposition}[Theorem]{Proposition}
\theoremstyle{definition}
\theoremstyle{remark}
\newtheorem{Remark}[Theorem]{Remark}
\def \RE{\text{Re}}
\def \T{{\mathcal T}}
\def \0{\lambda_{0}}
\def \A{{\mathbb A}}
\newcommand{\Id}{\mathrm{id}}
\newcommand{\vol}{\mathrm{vol}}
\newcommand{\conf}{\mathrm{conf}}
\newcommand{\Teich}{\mathrm{Teich}}
\newcommand{\mc}{\mathcal}
\DeclareMathOperator{\im}{Im}
\subjclass[2020]{30F30, 32G15, 37D40, 58C40, 58J90}
\begin{document}
\title[]{Quasi-fuchsian flows and the coupled vortex equations}

\author[M. Ceki\'{c}]{Mihajlo Ceki\'{c}}
\date{\today}
\address{Institut f\"ur Mathematik, Universit\"at Z\"urich, Winterthurerstrasse 190,
CH-8057 Z\"urich, Switzerland}
\curraddr{CNRS \&  LAMA, Universit\'e Paris-Est Cr\'eteil, 61 Av. du Général de Gaulle Centre, 94000 Créteil, France}
\email{mihajlo.cekic@cnrs.fr}

\author[G.P. ~Paternain]{Gabriel P. Paternain}
\address{ Department of Mathematics, University of Washington, Seattle, WA 98195, USA}
\email {gpp24@uw.edu}


\date{\today}

\begin{abstract} We provide an alternative construction of the quasi-Fuchsian flows introduced by Ghys in  \cite{Ghys-92}.  Our approach is based
on the coupled vortex equations that allows to see these flows as thermostats on the unit tangent bundle of the Blaschke metric
uniquely determined by a conformal class and a holomorphic quadratic differential. We also give formulas for the marked length spectrum of a quasi-Fuchsian flow in the thermostat parametrization.
\end{abstract}
 
\maketitle

\section{Introduction}

In 1992, Ghys introduced a remarkable class of Anosov flows. Given a closed oriented surface $M$ of genus $\geq 2$, let $g_1$ and $g_2$ be two metrics of constant curvature $-1$, and let $[g_{1}]_{\Teich}$ and $[g_{2}]_{\Teich}$ be the corresponding points in Teichm\"uller space $\T(M)$. Th\'eor\`eme B in \cite{Ghys-92} establishes the existence of a smooth, orientable Anosov foliation $\phi_{[g_{1}]_{\Teich}, [g_{2}]_{\Teich}}$ on the unit tangent bundle $T_{1}M$ of positive half-lines tangent to the surface $M$, with the following properties: any flow parametrizing $\phi_{[g_{1}]_{\Teich}, [g_{2}]_{\Teich}}$ has a smooth weak stable foliation that is $C^{\infty}$-conjugate to the weak stable foliation of $g_1$, and a smooth weak unstable foliation that is $C^{\infty}$-conjugate to the weak unstable foliation of $g_2$. Moreover, a volume form is preserved if and only if $[g_{1}]_{\Teich} = [g_{2}]_{\Teich}$. Ghys called these flows \emph{quasi-Fuchsian flows}, as his construction is analogous to that of quasi-Fuchsian groups obtained by coupling two Fuchsian groups \cite[IX.F]{Maskit_88}.

In the realm of Anosov flows on closed 3-manifolds, quasi-Fuchsian flows hold a distinguished position due to the following classification result, also due to Ghys: \cite[Th\'eor\`eme 4.6]{Ghys_93} asserts that any Anosov flow on a 3-manifold with smooth weak stable and unstable foliations must be $C^{\infty}$-orbit equivalent (up to finite covers) to either a quasi-Fuchsian flow or the suspension of a diffeomorphism of the 2-torus.

One can develop intuition for quasi-Fuchsian flows by first considering the case where $g_1$ and $g_2$ are $C^3$-close. In this case, when considering their respective geodesic flows on $T_{1}M$, the weak stable foliation of $g_1$ is transverse to the weak unstable foliation of $g_2$. Their intersection defines the Anosov foliation $\phi_{[g_{1}]_{\Teich}, [g_{2}]_{\Teich}}$, and hence \cite[Th\'eor\`eme B]{Ghys-92} can be viewed as a statement about extending this class of flows for \emph{arbitrary pairs} of points in Teichm\"uller space. Ghys' construction is based on introducing a new differentiable structure on $T_{1}M$ such that the geodesic flow of a fixed background hyperbolic metric $h$ admits a transverse $(\text{PSL}(2,\mathbb{R}) \times \text{PSL}(2,\mathbb{R}), \mathbb{RP}^{1} \times \mathbb{RP}^{1})$-structure, with a holonomy naturally constructed by coupling the holonomies of $g_1$ and $g_2$ (see \cite[Section 1]{BG_84} and \cite[1.2.C]{CC_00} for background on transverse structures). Then, using the fact that homeomorphic closed 3-manifolds are diffeomorphic, $\phi_{[g_{1}]_{\Teich}, [g_{2}]_{\Teich}}$ is obtained by pulling back the geodesic flow of $h$ via this abstract (i.e., not explicit) diffeomorphism. While this construction is elegant and well-suited for classification, it complicates certain computations, especially if one is interested—as we are—in explicitly computing resonant 1-forms, winding cycles of SRB measures, and helicity, as in \cite{CP22} (for background on the microlocal approach to hyperbolic flows see \cite{Lefeuvre-book}).

In \cite{Paternain-07}, the second author pointed out that quasi-Fuchsian flows may also arise explicitly as thermostat flows associated with a system of PDEs known as the \emph{coupled vortex equations}, and raised the question of whether this construction describes the same class introduced by Ghys. In this paper, we confirm that this is indeed the case. To set the context, it is useful to describe a broader class of Anosov thermostat flows associated with data given by a closed Riemann surface and a holomorphic differential of degree $m$ on the surface.

Let $(M, g)$ be a closed, oriented Riemannian surface of genus $\geq 2$, and let $\pi:SM\to M$ be its unit circle bundle. Denote by $X$ the geodesic vector field and by $V$ the generator of (oriented) rotations in the fibers of $SM$. Recall that the canonical line bundle $\mc{K} := (T_{\mathbb{C}}^*M)^{1, 0}$ is the holomorphic part of the complexified cotangent bundle $T_{\mathbb{C}}^*M$. For $m \in \mathbb{Z}_{\geq 0}$, there is a natural map
\[
\pi_m^*: C^\infty(M; \mc{K}^{\otimes m}) \to C^\infty(SM), \quad \pi_m^*T(x, v):= T_x(v, v, \dotsc, v).
\]
If $A \in C^\infty(M; \mc{K}^{\otimes m})$, we are interested in the flow generated by the vector field 
\begin{equation}\label{eq:thermostat-formula}
	F := X + \lambda V, \quad \lambda = \im(\pi_m^*A).
\end{equation}
The pair $(g, A)$ is said to satisfy the \emph{coupled vortex equations} if:
\begin{equation}\label{eq:coupled-vortex-intro}
	\bar{\partial} A = 0, \quad K_g = -1 + (m - 1)|A|_{g}^2,
\end{equation}
where $K_{g}$ is the Gaussian curvature of $g$, and $\bar{\partial}$ is the holomorphic derivative with respect to the complex structure defined by $g$. If the system \eqref{eq:coupled-vortex-intro} is satisfied, \cite[Theorem 5.1]{Mettler-Paternain-19} shows that $F$ is Anosov, 
and \cite[Theorem 5.5]{Mettler-Paternain-19} shows $F$ preserves an absolutely continuous measure if and only if $A = 0$.

By \cite[Remark 5.3]{Mettler-Paternain-19}, for any holomorphic differential $A$ of degree $m$, there is a unique metric $g$ in the conformal class $[g]_{\conf}$ that solves \eqref{eq:coupled-vortex-intro}, which we will refer to as the \emph{Blaschke metric}. The unique hyperbolic metric in this class will be denoted by $\sigma$, so that $[g]_{\conf} = [\sigma]_{\conf}$, but unless $A = 0$, $g$ and $\sigma$ do \emph{not} coincide. Hence, this class of flows is parametrized by the data $([g]_{\conf}, A)$, i.e. by a Riemann surface structure on $M$ and a holomorphic differential of degree $m$. The group $\text{Diff}_{0}(M)$ of diffeomorphisms isotopic to the identity acts naturally on this data and we shall denote the quotient by $\T\mathrm{D}_m(M)$, the Teichm\"uller space of holomorphic differentials of degree $m$. For $m=2$ we can think of $\T\mathrm{D}_2(M)$ as the tangent (or cotangent) bundle of $\T(M)$, and in this case we will write $\T\mathrm{QD}(M)$ for $\T\mathrm{D}_2(M)$.

This paper is concerned with the case $m=2$, where it is known that the weak bundles are $C^{\infty}$ and are explicitly described as follows (see \cite{Paternain-07} and \cite[Section 6.1]{Mettler-Paternain-19}):
\begin{equation}\label{eq:weakbundles}
\mathbb{R}F \oplus \mathbb{R} (H + r^{s,u}V),
\end{equation}
where $H := [V,X]$ is the horizontal vector field and
\begin{equation}\label{eq:rus}
	r^{u} = 1 + \frac{V\lambda}{2}, \quad r^{s} = -1 + \frac{V\lambda}{2}.
\end{equation}
In order to show that the class of thermostat flows defined by \eqref{eq:coupled-vortex-intro} for $m=2$ provides an alternative description of the class of quasi-Fuchsian flows introduced by Ghys, we need to exhibit a \emph{bijection}
\[\mc{G}: \T\mathrm{QD}(M) \to \T(M) \times \T(M)\]
such that if we let $([g_{1}]_{\Teich}, [g_{2}]_{\Teich}) := \mc{G}([g]_{\conf},A)$, then the weak stable foliation of the hyperbolic metric $g_1$ is smoothly conjugate to the foliation defined by \eqref{eq:weakbundles} for $r^s$, and the weak unstable foliation of the hyperbolic metric $g_2$ is smoothly conjugate to the foliation defined by \eqref{eq:weakbundles} for $r^u$. The bijection $\mc{G}$ that we will use is well known in the literature from different points of view. In this note we choose
the harmonic map approach to Teichmüller theory (see \cite{EE69,Tromba-92}) but we could have also used the Anti-de Sitter (AdS) point of view as originated in the paper by Mess \cite{Mess_07}; we briefly survey this alternative approach in Remark \ref{remark:ads} below.

Given two hyperbolic metrics $\sigma$ and $\rho$ on $M$, it is well-known that there exists a unique harmonic diffeomorphism $f: (M,\sigma) \to (M,\rho)$ isotopic to the identity. Moreover, the \emph{Hopf quadratic differential} $(f^*\rho)^{2,0}$ is holomorphic (here the $(2, 0)$-part of $f^*\rho$ is taken with respect to the complex structure derived from $[\sigma]_{\conf}$), and thus we have a map
\[\Phi_{[\sigma]_{\conf}}: \T(M) \to  \text{QD}([\sigma]_{\conf}),\;\;\; [\rho]_{\Teich} \mapsto (f^*\rho)^{2,0},\]
where $\text{QD}([\sigma]_{\conf})$ denotes the vector space of holomorphic quadratic differentials on the Riemann surface determined by $[\sigma]_{\conf}$. This map is studied in \cite{Samp78,Wolf89}, where it is shown that it is a \emph{homeomorphism}, see \cite[Theorem 3.1]{Wolf89}. (Note that this map only depends on the conformal class $[\sigma]_{\conf}$, which is being used as a \emph{base point}.)

Let us introduce some notation concerning the \emph{marked length spectrum}. For a non trivial free homotopy class $\mathfrak{c}$ in $M$ and a hyperbolic metric $\sigma$, write $\ell_\sigma(\mathfrak{c})$ for the length (with respect to $\sigma$) of the unique closed geodesic in the class $\mathfrak{c}$.

We are now ready to state our main result:

\begin{Theorem}\label{thm:main}
The map $\mc{G}$ defined by
\[\mc{G}([\sigma]_{\conf}, A) = (\Phi_{[\sigma]_{\conf}}^{-1}(A), \Phi_{[\sigma]_{\conf}}^{-1}(-A)),\]
where $[\sigma]_{\conf}$ is a conformal class on $M$ and $A \in \mathrm{QD}([\sigma]_{\conf})$, descends to a bijection $\mc{G}: \T \mathrm{QD}(M)\to \T(M)\times \T(M)$ such that if we define $([g_{1}]_{\Teich}, [g_{2}]_{\Teich}) := \mc{G}([\sigma]_{\conf}, A)$ and we let $g\in [\sigma]_{\conf}$ be the Blaschke metric, then:
\begin{enumerate}[topsep=4pt, itemsep=4pt, label=\alph*.]
    \item The weak stable foliation of the geodesic flow of the hyperbolic metric $g_1$ is smoothly conjugate to the foliation tangent to \eqref{eq:weakbundles} for $r^s$.
    \item The weak unstable foliation of the geodesic flow of the hyperbolic metric $g_2$ is smoothly conjugate to the foliation tangent to \eqref{eq:weakbundles} for $r^u$.
    \item The conjugacies between the foliations from the previous two items are explicit and are given in Section \ref{section:I} below in terms of the holomorphic quadratic differential $A$ and the Blaschke metric $g$ determined by the pair $([\sigma]_{\conf}, A)$.
    \item The flow of $F$ preserves a volume form if and only if $[g_{1}]_{\Teich} = [g_{2}]_{\Teich}$, which holds if and only if $A = 0$.
    \item For any non trivial free homotopy class $\mathfrak{c}$ in $M$, there is a unique closed orbit $\zeta$ of the flow generated by \eqref{eq:thermostat-formula} such that the projection $\pi \circ \zeta$ of $\zeta$ to $M$ belongs to $\mathfrak{c}$, and
\begin{equation}\label{eq:arithmetic-mean}
	\ell_g(\pi \circ \zeta) = \frac{1}{2}(\ell_{g_1}(\mathfrak{c}) + \ell_{g_2}(\mathfrak{c})),
\end{equation}
where $\ell_g(\pi \circ \zeta)$ is the length of $\pi \circ \zeta$ with respect to $g$.
\end{enumerate}
\end{Theorem}

Theorem \ref{thm:main}, Items a--d, provide an independent ``PDE proof'' of \cite[Théorème B]{Ghys-92}, and the classification result \cite[Théorème 4.6]{Ghys_93} shows that the two constructions are equivalent up to smooth orbit equivalence. We note that the appealing formula in Item e, expressing the marked length spectrum of the thermostat flow as the arithmetic mean of those of $g_1$ and $g_2$, depends on the particular parametrization of the flow and is a novel feature not detected in \cite{Ghys-92} (note that $\ell_g(\pi\circ \zeta)$ coincides with the period of $\zeta$). It is interesting to note that Ghys' proof of Item d relies on the marked length spectrum rigidity for hyperbolic metrics, while in our case we invoke \cite[Theorem 5.5]{Mettler-Paternain-19}, which relies on an energy identity and ideas from geometric inverse problems. One possible approach to show that $\mc{G}$ is a bijection, is to consider a functional given by the sum of two Dirichlet energies and use the argument in the proof of \cite[Proposition 2.12]{Schoen93} to establish that this functional has a unique critical point (a global minimum).

In summary, here is how our construction of quasi-Fuchsian flows proceeds, starting from a pair of points $([g_{1}]_{\Teich}, [g_{2}]_{\Teich})$:
\begin{itemize}
    \item The sum of Dirichlet energies associated with the pair $([g_{1}]_{\Teich}, [g_{2}]_{\Teich})$ produces a unique global minimum on $\T(M)$ and hence a point $[\sigma]_{\Teich} \in \T(M)$, which, as explained in \cite{Schoen93}, should be thought of as the midpoint in $\T(M)$ between $[g_{1}]_{\Teich}$ and $[g_{2}]_{\Teich}$ since the pair of harmonic maps thus obtained have opposite Hopf differentials, $A$ and $-A$.
    \item With the pair $([\sigma]_{\conf}, A)$ obtained above, we solve the coupled vortex equations \eqref{eq:coupled-vortex-intro} to obtain a unique Blaschke metric $g \in [\sigma]_{\conf}$.
    \item On the unit circle bundle $SM_g$ of $g$, we consider the thermostat flow of $F = X + \lambda V$, where $\lambda = \im(\pi_2^*A)$.
    \item It turns out that the flow of $F$ is Anosov, with the smooth weak stable (resp. unstable) foliation conjugate to the weak stable (resp. unstable) foliation of the geodesic flow of $g_{1}$ (resp. $g_{2}$).
\end{itemize}

Reversing the process, starting with a pair $(g, A)$ solving the coupled vortex equations \eqref{eq:coupled-vortex-intro}, we can determine the pair $([g_{1}]_{\Teich}, [g_{2}]_{\Teich})$ by giving explicit hyperbolic representatives in the two classes:
\[2\text{\rm Re}\, A+\left(1+|A|^{2}_{g}\right)g\in [g_{1}]_{\Teich}\] 
and
\[-2\text{\rm Re}\, A+\left(1+|A|^{2}_{g}\right)g\in [g_{2}]_{\Teich}.\]
Often it is more practical to describe the points $[g_{1}]_{\Teich}$ and $[g_{2}]_{\Teich}$ using the marked length spectrum. In the course of proving the main theorem, we will also show that for a free homotopy class $\mathfrak{c}$ in $M$ we have
\begin{equation}\label{eq:mls1}
 \ell_{g_{1}}(\mathfrak{c}) = \ell_{g}(\pi \circ \zeta) + \frac{1}{2}\int_{\zeta}V\lambda,
\end{equation}
where $\zeta$ is as in Item e. Similarly, the marked length spectrum for $g_2$ is given by
\begin{equation}\label{eq:mls2}
\ell_{g_{2}}(\mathfrak{c}) = \ell_{g}(\pi \circ \zeta) -\frac{1}{2}\int_{\zeta}V\lambda.
\end{equation}
Item e from Theorem \ref{thm:main} follows right away from \eqref{eq:mls1} and \eqref{eq:mls2}.

An amusing consequence of these identities is a ``new'' proof of the marked length spectrum rigidity for hyperbolic metrics. Indeed, if $\ell_{g_{1}}(\mathfrak{c}) = \ell_{g_{2}}(\mathfrak{c})$ for all $\mathfrak{c}$, then
\[\int_{\zeta}V\lambda = 0\]
for all closed orbits $\zeta$ of $F$. By the Livšic theorem, $V\lambda$ is a coboundary, and hence $F$ preserves a volume form (since $V\lambda$ is the divergence of $F$). By Theorem \ref{thm:main}, Item d, this implies that $A = 0$, and thus $[g_{1}]_{\Teich} = [g_{2}]_{\Teich}$.
\medskip

In addition to the applications in \cite{CP22} mentioned above, we may raise also a few interesting questions related to our approach. Namely, quasi-Fuchsian flows associate an Anosov flow on a $3$-manifold to a pair of co-compact representations $\pi_1(M) \to \mathrm{PSL}(2, \mathbb{R})$; in a parallel development, from the same data \cite{Delarue-Guillarmou-Monclair-25} (see also \cite{Bonthonneau-Lefeuvre-Weich-26}) construct an Axiom A flow from the spacelike geodesic flow on the $3$-dimensional Anti-de Sitter space, with the same periods of closed orbits. It is natural to ask what is the precise relation between these two flows. Another question is to clarify semisimplicity for resonant forms and more generally the behaviour of the Ruelle zeta function for quasi-Fuchsian flows \cite{CP22}; we do this for cubic differentials in \cite{Cekic-Paternain-26}.

\medskip
This paper is organized as follows: Section \ref{section:preliminaries} provides preliminaries on the geometry of the unit circle bundle, the coupled vortex equations, harmonic maps, and the Hopf differential. Section \ref{section:G} explains why the map $\mathcal G$ is a bijection. Section \ref{section:I} introduces the explicit conjugacy between foliations and proves Theorem \ref{thm:main}. The final Section \ref{section:mls} provides proofs of the marked length spectrum identities \eqref{eq:mls1} and \eqref{eq:mls2}.

\subsection*{Acknowledgements}  We are grateful to Martin Leguil and Andrey Gogolev for discussions related to their recent work \cite{GLRH_24}. GPP was supported by NSF grant DMS-2347868. MC received funding from an Ambizione grant (project number 201806) from the Swiss
National Science Foundation.

\section{Preliminaries} \label{section:preliminaries}

\subsection{Geometry of the unit circle bundle}\label{ssec:geometry} We refer to \cite{Guillemin-Kazhdan-80, Merry-Paternain-11, Singer-Thorpe-76} for details on the contents of this subsection. Let $(M, g)$ be an oriented closed Riemannian surface and let $X$ be the geodesic vector field on the unit sphere bundle $SM = SM_g$. Denote by $V$ the generator of the fibrewise rotation action and by $H$ the horizontal vector field. Let $K_g$ be the Gauss curvature of $(M, g)$. Then:
\begin{align}\label{eq:surface-geometry}
\begin{split}
	[H, V] &= X,\\
	[V, X] &= H,\\
	[X, H] &= K_{g}V.
\end{split}
\end{align}
The frame $\{X, H, V\}$ has a dual global coframe of $1$-forms $\{\alpha, \beta, \psi\}$ that satisfies the structure equations:
 \begin{align}\label{eq:surface-geometry2}
\begin{split}
	d\alpha &= \psi\wedge\beta,\\
	d\beta &= -\psi\wedge\alpha,\\
	d\psi&= -K_{g}\,\alpha\wedge\beta.
\end{split}
\end{align}
For $k \in \mathbb{Z}$ and $x \in M$, introduce
\[\mho_k(x) = \{f \in C^\infty(S_xM) \mid V f = ik f\}.\]
It is straightforward to see that $\mho_k \to M$ has the structure of a smooth line bundle; a smooth section of this bundle may be seen as a smooth function on $SM$. In fact if $\mc{K} := (T_{\mathbb{C}}^*M)^{1, 0}$ is the associated \emph{canonical} bundle and $\mc{K}^{-1} := (T_{\mathbb{C}}^*M)^{0, 1}$, then for any $k \in \mathbb{Z}$, $\mho_k$ may be identified with the tensor power $\mc{K}^{\otimes k}$ (here, if $k < 0$ we set $\mc{K}^{\otimes k} = (\mc{K}^{-1})^{\otimes |k|}$). For any $k \in \mathbb{Z}_{\geq 0}$ and $x \in M$, there are natural maps
\[\pi_{k}^*: (\otimes_S^k T_{\mathbb{C}}^*M)_x \to C^\infty(S_xM), \quad \pi_k^*T(v) = T(v, \dotsc , v),\]
where $\otimes_S^k T_{\mathbb{C}}^*M$ denotes the bundle of symmetric tensors of degree $k$. Denote by $\otimes_S^k T_{\mathbb{C}}^*M|_{0-\mathrm{tr}}$ the sub-bundle of \emph{trace-free} symmetric tensors, where a symmetric $k$-tensor $T$ at $x$ is trace free if for an orthonormal basis $(\mathbf{e}_i)_{i = 1}^2$ of $T_xM$, $T(\mathbf{e}_1, \mathbf{e}_1, \dotso) + T(\mathbf{e}_2, \mathbf{e}_2, \dotso) = 0$. It is straightforward to see that there are vector bundle isomorphisms
\[\forall k \in \mathbb{Z}_{> 0}, \quad \otimes_S^k T_{\mathbb{C}}^*M|_{0-\mathrm{tr}} = \mc{K}^{\otimes k} \oplus \mc{K}^{\otimes (-k)}; \quad \forall k \in \mathbb{Z}, \quad \pi_{|k|}^*: \mc{K}^{\otimes k} \xrightarrow{\cong} \mho_k.\]
Then $\pi_{|k|}^*$ identifies smooth sections of $\mc{K}^{\otimes k}$ with $C^{\infty}(M,\mho_{k})$.

\subsection{Coupled vortex equations} Let $(M, g)$ be a closed oriented Riemannian surface of genus $\geq 2$. As pointed out in the introduction an interesting class of thermostats is obtained when $\lambda$ is generated by a differential $A$ of degree $m\geq 2$ (that is, a smooth section of $\mc{K}^{\otimes m}$) such that $g$ and $A$ are linked by the \emph{coupled vortex equations}
\begin{equation}\label{eq:coupled-vortex-equations}
	\bar{\partial} A = 0, \quad K_g = -1 + (m - 1)|A|_{g}^2.
\end{equation}
(See  \cite{Li_19} for the relation with the classical vortex equations in gauge theory.)
It is known that when \eqref{eq:coupled-vortex-equations} holds, the flow of $F:=X+\lambda V$ for 
\begin{equation}\label{eq:lambda-A}
	\lambda := \im(\pi_m^*A)
\end{equation}
is Anosov, see \cite[Theorem 5.1]{Mettler-Paternain-19}. We note that by \cite[Lemma 5.2]{Mettler-Paternain-19}, we have 
\begin{equation}\label{eq:curvature-condition-coupled-vortex}
	-1 \leq K_g < 0
	\end{equation}
	and by \cite[Remark 5.3]{Mettler-Paternain-19}, for every holomorphic differential $A$ on $(M, [g]_{\conf})$ we obtain a unique solution $(g,A)$ to the coupled vortex equations \eqref{eq:coupled-vortex-equations}. We refer to $g$ as the {\it Blaschke metric} given that for $m=3$ this matches precisely the metric arising in the context of affine spheres
	\cite{Wang_91}.
	
	Later on it will be useful for us to pull-back $A$ to $SM$ via the canonical projection $\pi:SM\to M$. By \cite[page 563 and (4.2)]{Mettler-Paternain-19} (alternatively, see \cite[Appendix A]{CP22}), we may write
	\[\pi^*A=((V\lambda)/m+i\lambda)(\alpha+i\beta)^{\otimes m},\]
	and we have
	\begin{equation}
	\pi^*|A|_{g}^2=(V\lambda)^2/m^2+\lambda^2,
	\label{eq:norm}
	\end{equation}
	and for $m=2$ in particular it holds that
\begin{equation}
\RE\,\pi^*A=	 \frac{1}{2}V\lambda(\alpha \otimes \alpha - \beta \otimes \beta) - \lambda(\alpha \otimes \beta + \beta \otimes \alpha).
\label{eq:reA}
\end{equation}
Finally $A$ being holomorphic translates into \cite[Lemma 4.1]{Mettler-Paternain-19} (alternatively see \cite[Lemma A.2]{CP22}):
\begin{equation}
\label{eq:Ahol}
XV\lambda-mH\lambda=0,\quad HV\lambda+mX\lambda=0,
\end{equation}
where we note that the second equality follows from the first one thanks to the structure equations \eqref{eq:surface-geometry}.

\subsection{Harmonic maps and the Hopf differential}\label{ssec:harmonic-map} See \cite[Chapter 9]{Jost-17}, as well as  \cite{EE69,Tromba-92, Farb-Margalit-12} for background and more details about this section. Given a smooth map 
$$f:(M,\sigma)\to (N,\rho)$$
between closed Riemannian manifolds, we define the \emph{Dirichlet energy} of $f$ as
\[E(f):=\int_{M}e(f)\, d\text{vol}_{\sigma},\]
where $d\text{vol}_{\sigma}$ denotes the volume form of $\sigma$ and
\[e(f):=\frac{1}{2}\langle df, df\rangle_{T^{*}M\otimes f^{*}TN}=\frac{1}{2}\text{Tr}_{\sigma}(f^*\rho),\]
where $\langle{\bullet, \bullet}\rangle_{T^*M \otimes f^*TN}$ is the fibrewise inner product defined from $\sigma, \rho$, and $f$, and $\mathrm{Tr}_{\sigma}(\bullet)$ denotes the trace of a $2$-tensor $\bullet$ with respect to $\sigma$. The Euler-Lagrange equation for $E$ is the harmonic map equation $\text{Tr}_{\sigma}(\nabla df)=0$, where $\nabla$ is the tensor product of Levi-Civita connections on $T^*M \otimes f^*TN$. Clearly $E$ (and $e$) depend on $\sigma$ and $\rho$; when we would like to emphasise this dependence we will write $E_{\sigma, \rho}$.

When $M$ is two-dimensional (and orientable), it is easy to see from the definition that $e(f)\, d\vol_\sigma$ as well as the Dirichlet energy only depend on the conformal class $[\sigma]_{\conf}$, and hence the harmonic map equation only depends on the complex structure on $M$. The quadratic differential $A:=(f^{*}\rho)^{2,0}$ is called the {\it Hopf differential} of $f$. As an important fact, a direct computation in local (isothermal) coordinates shows that if $f$ is harmonic, then $A$ is holomorphic, see \cite[pages 451 and 452]{Wolf89} (and references therein). (Conversely, if $A$ is holomorphic, then $f$ is harmonic assuming also that the Jacobian of $f$ is non-zero.) In this paper, we are exclusively concerned with the case when the target $N=M$ and $\rho$ is a hyperbolic metric. Then, given the data $[\sigma]_{\conf}$ and $\rho$ on $M$, there exists a unique harmonic diffeomorphism $f_{\sigma,\rho}:=f:(M,[\sigma]_{\conf})\to (M,\rho)$ such that $f$ is isotopic to the identity (see \cite[Theorem 9.7.2]{Jost-17}). 

Recall that Teichm\"uller space $\T(M)$ is given by equivalence classes of hyperbolic metrics on $M$ under the action of the group $\text{Diff}_{0}(M)$ of diffeomorphisms isotopic to the identity. Given $\varphi, \phi\in \text{Diff}_{0}(M)$ a straightforward computation from the defining Euler-Lagrange equations shows that
\begin{equation}\label{eq:equivariance}
	f_{\varphi^*\sigma,\phi^*\rho}=\phi^{-1}\circ f_{\sigma,\rho}\circ \varphi, \quad E_{\varphi^*\sigma, \phi^*\rho}(f_{\varphi^*\sigma,\phi^*\rho}) = E_{\sigma, \rho} (f).
\end{equation}
Hence there is a well defined function $\mathcal E:\T(M)\times \T(M)\to \mathbb{R}$ given by
\[\mathcal E([\sigma]_{\Teich}, [\rho]_{\Teich}) := E_{\sigma, \rho}(f_{\sigma,\rho}),\]
where $\sigma$ and $\rho$ are arbitrary representatives of $[\sigma]_{\Teich}$ and $[\rho]_{\Teich}$, respectively.

Following the previous discussion we may introduce
\begin{equation}\label{eq:wolf}
\Phi_{[\sigma]_{\conf}}: \T(M) \to  \text{QD}([\sigma]_{\conf}),\;\;\; [\rho]_{\Teich} \mapsto (f_{\sigma, \rho}^*\rho)^{2,0},
\end{equation}
where this descends to Teichm\"uller space thanks \eqref{eq:equivariance}, i.e. using (for $\varphi = \Id$)
\begin{equation}\label{eq:wolf-equivariance}
	 \Phi_{\varphi^*\sigma}(\phi^*\rho) = \varphi^* \Phi_\sigma(\rho),\quad \varphi, \phi \in \mathrm{Diff}_0(M).
\end{equation}
The map $\Phi_{[\sigma]_{\conf}}$ is known to be a \emph{homeomorphism} by the results in \cite{Samp78,Wolf89}, see \cite[Theorem 3.1]{Wolf89}. (This gives in particular that $\T(M)$ is homeomorphic to $\mathbb{R}^{6g(M)-6}$ where $g(M)$ is the genus of $M$, since $6g(M) - 6$ is the real dimension of the space of holomorphic quadratic differentials.) We can think of $\Phi^{-1}_{[\sigma]_{\conf}}$ as a kind of exponential map for Teichm\"uller space; given $A\in  \text{QD}([\sigma]_{\conf})$ an explicit hyperbolic metric representing $\Phi_{[\sigma]_{\conf}}^{-1}(A)\in \T(M)$ is given in Lemma \ref{lemma:fg1} below in terms of $A$ and the Blaschke metric in the class $[\sigma]_{\conf}$.

\section{The bijection $\mc{G}$}\label{section:G}

As described in the Introduction, the map $\Phi_{[\sigma]_{\conf}}$ form \eqref{eq:wolf} can be used to define a map
 \[\mc{G}:\mathcal{T}\mathrm{QD}(M)\to \mathcal T(M)\times \mathcal T(M)\]
by setting
\[\mc{G}([\sigma]_{\conf}, A)=(\Phi^{-1}_{[\sigma]_{\conf}}(A),\Phi^{-1}_{[\sigma]_{\conf}}(-A)).\]
By \eqref{eq:wolf-equivariance}, $\mc{G}$ is invariant under the $\text{Diff}_{0}(M)$-action and hence it descends to a map on $\mathcal{T}\mathrm{QD}(M)$ (still denoted by $\mc{G}$).
The next proposition paves the way for the proof of Theorem \ref{thm:main}; we include a proof for completeness sake.

\begin{Proposition} The map $\mc{G}$ is a bijection. \label{prop:Ghomeo}
\end{Proposition}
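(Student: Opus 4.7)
The plan is to construct the inverse of $\mc{G}$ via a variational argument, following \cite[Proposition 2.12]{Schoen93}. Given a pair $([g_1]_{\Teich}, [g_2]_{\Teich}) \in \T(M) \times \T(M)$, I would consider the functional $\mc{F}: \T(M) \to \R$ defined by
\[\mc{F}([\sigma]_{\Teich}) := \mc{E}([\sigma]_{\Teich}, [g_1]_{\Teich}) + \mc{E}([\sigma]_{\Teich}, [g_2]_{\Teich}).\]
The two claims to prove are: (i) $\mc{F}$ admits a unique critical point $[\sigma_*]_{\Teich}$, which is a global minimum; (ii) at this critical point, the Hopf differentials of the harmonic maps from $(M,[\sigma_*]_{\conf})$ to $(M,g_1)$ and to $(M,g_2)$ are opposite. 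Setting $A$ to be the Hopf differential of the first then gives $\mc{G}([\sigma_*]_{\conf}, A) = ([g_1]_{\Teich}, [g_2]_{\Teich})$, establishing surjectivity.

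For (ii), I would invoke the classical variational formula underlying Wolf's proof that $\Phi_{[\sigma]_{\conf}}$ is a homeomorphism (see \cite{Wolf89}): under the standard identification $T^*_{[\sigma]_{\Teich}}\T(M) \cong \mathrm{QD}([\sigma]_{\conf})$, the differential of $[\sigma]_{\Teich}\mapsto \mc{E}([\sigma]_{\Teich},[\rho]_{\Teich})$ is a fixed nonzero scalar multiple of $\Phi_{[\sigma]_{\conf}}([\rho]_{\Teich})$. Applied to $\mc{F}$, this shows that $[\sigma_*]_{\Teich}$ is critical if and only if
\[\Phi_{[\sigma_*]_{\conf}}([g_1]_{\Teich}) + \Phi_{[\sigma_*]_{\conf}}([g_2]_{\Teich}) = 0,\]
which is exactly the condition that $\mc{G}([\sigma_*]_{\conf}, A) = ([g_1]_{\Teich}, [g_2]_{\Teich})$ for $A := \Phi_{[\sigma_*]_{\conf}}([g_1]_{\Teich})$. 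This completely reduces surjectivity of $\mc{G}$ to claim (i).

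Existence of a minimum in (i) follows from properness of $\mc{F}$: each summand $\mc{E}(\cdot,[g_i]_{\Teich})$ is known to be proper on $\T(M)$ (Dirichlet energy blows up as the base conformal class diverges), and properness is preserved under summation. The main obstacle is \emph{uniqueness}. For this I would follow \cite[Proposition 2.12]{Schoen93}, where strict convexity of the Dirichlet energy as a function of the conformal structure along a suitable family of deformations in $\T(M)$ is established; strict convexity is preserved under summation, so $\mc{F}$ has at most one critical point. Injectivity of $\mc{G}$ then comes for free: if $\mc{G}([\sigma]_{\conf},A) = ([g_1]_{\Teich},[g_2]_{\Teich})$, the variational formula above forces $[\sigma]_{\Teich}$ to be the (unique) critical point of the associated $\mc{F}$, and then $A = \Phi_{[\sigma]_{\conf}}([g_1]_{\Teich})$ is determined because $\Phi_{[\sigma]_{\conf}}$ is already known to be a homeomorphism. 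The delicate part of the whole argument is therefore the convexity input from \cite{Schoen93}; everything else is a clean assembly on top of it.
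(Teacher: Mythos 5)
Your proposal is correct and follows essentially the same route as the paper: both define the sum of Dirichlet energies $\mc{F}$, use the first-variation formula identifying $D\mc{E}(\cdot,[\rho]_{\Teich})$ with (the real part of) the Hopf differential to characterize critical points as those where the two Hopf differentials cancel, obtain existence of a minimizer from properness, and defer uniqueness of the critical point to the argument of \cite[Proposition 2.12]{Schoen93}. The only cosmetic differences are the attribution of the variational formula (the paper cites \cite{FT87} and \cite{SY_79} rather than Wolf) and your slightly more explicit unwinding of how injectivity follows.
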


\begin{proof} We first show that $\mc{G}$ is surjective. Let $\sigma$ and $g_0$ be hyperbolic metrics. We want to think of $g_0$ as fixed and $\sigma$ as a variable and
we let $\mathcal E^{[g_{0}]_{\Teich}}([\sigma]_{\Teich}) := \mathcal E([\sigma]_{\Teich}, [g_{0}]_{\Teich})$, so that we get a map
\[\mathcal E^{[g_{0}]_{\Teich}}: \mathcal T(M)\to \mathbb{R}_{>0}.\]
This map has some good properties as shown in \cite[Theorem 2.4]{FT87} and \cite{SY_79} (see also references therein): it is smooth, proper and its derivative at $[\sigma]_{\Teich}$ is given by
\begin{equation}
D\mathcal E^{[g_{0}]_{\Teich}}([\sigma]_{\Teich})(\eta)=-\langle \xi,\eta\rangle_{[\sigma]_{\Teich}},
\label{eq:derivative}
\end{equation}
where $\eta\in T_{[\sigma]_{\Teich}}\mathcal T(M)$ (identified with $\mathrm{QD}([\sigma]_{\conf})$), and $\xi=\mathrm{Re} (\varphi)$ with $\varphi = \Phi_{[\sigma]_{\conf}}([g_0]_{\Teich})$ is the Hopf differential of the unique harmonic map defined by $\sigma$ and $g_{0}$. The inner product $\langle{\bullet, \bullet}\rangle$ is given by the Weil-Petersson metric ($L^2$).

Fix a pair $[g_{1}]_{\Teich}, [g_{2}]_{\Teich}\in \mathcal T(M)$ and define the function $F:\mathcal T(M)\to \mathbb{R}_{>0}$
\[F([\sigma]_{\Teich}):=\mathcal E^{[g_{1}]_{\Teich}}([\sigma]_{\Teich}) + \mathcal E^{[g_{2}]_{\Teich}}([\sigma]_{\Teich}).\]
Note that $F$ is also proper. Using \eqref{eq:derivative} we see that $[\sigma]_{\Teich}$ is a critical point of $F$ if and only if
\[\Phi_{[\sigma]_{\conf}}([g_{1}]_{\Teich})+\Phi_{[\sigma]_{\conf}}([g_{2}]_{\Teich})=0.\]
Now since $F$ is proper and bounded below it has an absolute minimum $[\sigma_{\text{min}}]_{\Teich} \in \mc{T}(M)$. If we let $A:=\Phi_{[\sigma_{\text{min}}]_{\conf}}([g_{1}]_{\Teich})$, then we see that
\[\mc{G}([\sigma_{\text{min}}]_{\conf}, A) = ([g_{1}]_{\Teich}, [g_{2}]_{\Teich})\]
as desired. Note that the argument above shows (by definition of $\mc{G}$) that $\mc{G}([\sigma]_{\conf}, A) = ([g_{1}]_{\Teich}, [g_{2}]_{\Teich})$ if and only if $[\sigma]_{\Teich}$ is a critical point of $F$ and $A=\Phi_{[\sigma]_{\conf}}([g_{1}]_{\Teich}) = -\Phi_{[\sigma]_{\conf}}([g_{2}]_{\Teich})$.

To show that $\mc{G}$ is injective it suffices to invoke the proof of \cite[Proposition 2.12]{Schoen93} which shows that the global minimum $[\sigma_{\text{min}}]_{\Teich}$ is in fact the {\it unique}
critical point of $F$.
\end{proof}

\begin{Remark} It is natural to wonder if there is a similar uniqueness statement when considering a functional on Teichm\"uller space obtained by adding more than
two Dirichlet functionals. It has been recently shown that this fails as soon as we add three Dirichlet energies \cite{Mark_22}.

\end{Remark}

\section{The conjugacy $\mathcal{I}$ and proof of Theorem \ref{thm:main}}\label{section:I}

Let $(M, [\sigma]_{\conf})$ be a Riemann surface of genus $\geq 2$ and $g_{1}$ a hyperbolic metric. As before, we let $f:(M, [\sigma]_{\conf})\to (M, g_{1})$ be the unique harmonic diffeomorphism
isotopic to the identity and $A:=(f^{*}g_{1})^{2,0}$ the holomorphic Hopf differential associated to $f$. Recall that (see \eqref{eq:coupled-vortex-equations}) we denote by $g$ the Blaschke metric determined by the data $([\sigma]_{\conf}, A)$ so that $g$ and $A$ are linked by $K_{g}=-1+|A|^{2}_{g}$. We start with the following observation which is well known to experts, compare with \cite[Theorem 11.2]{Hi_87}; we include a proof for completeness.

\begin{Lemma} We have
\[f^{*}g_{1}=2\text{\rm Re}\, A+\left(1+|A|^{2}_{g}\right)g.\]
\label{lemma:fg1}
\end{Lemma}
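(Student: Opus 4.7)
My plan is to decompose both sides of the identity with respect to the conformal class $[\sigma]_{\conf}=[g]_{\conf}$ and reduce the remaining $(1,1)$-part equality to the single pointwise statement $\mc H \equiv 1$ on $M$, where $\mc H$ is the holomorphic energy density of $f$ with respect to the Blaschke source metric $g$.

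Choose isothermal coordinates $z$ for $g$, writing $g=\rho^2|dz|^2$ and $A=a(z)\,dz^2$ with $a$ holomorphic, and isothermal coordinates $w$ for $g_1$ with $g_1=\rho_1^2(w)|dw|^2$. Introducing the holomorphic and antiholomorphic energy densities
\[
\mc H := \rho_1^2(f)|f_z|^2/\rho^2, \qquad \mc L := \rho_1^2(f)|f_{\bar z}|^2/\rho^2,
\]
a direct expansion of $f^*g_1 = \rho_1^2(f)\,df\cdot d\bar f$ gives
\[
f^*g_1 \;=\; A + \bar A + (\mc H + \mc L)\,g, \qquad a \;=\; \rho_1^2(f)\,f_z\overline{f_{\bar z}}.
\]
Combining the formula for $a$ with the norm convention \eqref{eq:norm} yields $|A|_g^2 = \mc H\mc L$. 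Matching the trace parts, the lemma therefore reduces to the pointwise identity $\mc H + \mc L = 1 + \mc H\mc L$, which follows immediately from $\mc H \equiv 1$ on $M$ (since then both sides equal $1+\mc L$).

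To prove $\mc H \equiv 1$, I would invoke Sampson's Bochner identity for the harmonic diffeomorphism $f\colon(M,g)\to(M,g_1)$,
\[
\Delta_g \log \mc H \;=\; 2K_g - 2K_{g_1}(\mc H - \mc L),
\]
which is valid on all of $M$ since $f$ being a diffeomorphism forces $\mc H > 0$ everywhere. Substituting $K_{g_1}=-1$ together with the coupled vortex condition $K_g = -1 + \mc H\mc L$ collapses the right-hand side to $2(\mc H-1)(1+\mc L)$, in which $1+\mc L > 0$. The maximum principle on the compact surface $M$ then forces $\mc H \leq 1$ at the maximum of $\log \mc H$ and $\mc H \geq 1$ at its minimum, yielding $\mc H \equiv 1$. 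The main point to be careful about is the sign in Sampson's identity — the opposite sign would reverse the stability of $\mc H = 1$ and destroy the max-principle step; I would verify it either by a direct derivation from the harmonic map equation $f_{z\bar z}+2(\partial_w\log\rho_1)(f)\,f_z f_{\bar z}=0$, or by the consistency check $\Delta_g\log\mc H + \Delta_g\log\mc L = \Delta_g\log|A|_g^2 = 4K_g$, which follows from $|A|_g^2 = |a|^2/\rho^4$ together with the holomorphicity of $a$.
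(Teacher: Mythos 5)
Your proof is correct and follows essentially the same route as the paper: both decompose $f^*g_1$ into its trace-free part $2\,\mathrm{Re}\,A$ and its $(1,1)$-part, reduce the claim to the identity $\mathcal{H}+\mathcal{L}=1+\mathcal{H}\mathcal{L}$ via Wolf's relation $\mathcal{H}\mathcal{L}=|A|_g^2$, and close the argument using the Bochner identity for $\log\mathcal{H}$ combined with the coupled vortex equation $K_g=-1+|A|_g^2$. The only difference is one of packaging: the paper computes $\mathcal{H},\mathcal{L}$ relative to the hyperbolic representative $g_0$ and concludes by observing that $\log\mathcal{H}$ and the conformal factor $2u$ of the Blaschke metric satisfy the same elliptic PDE, whose solution is unique, whereas you work directly over the Blaschke metric and run the maximum principle yourself on $\Delta_g\log\mathcal{H}=2(\mathcal{H}-1)(1+\mathcal{L})$ (legitimate, since $\mathcal{H}>0$ everywhere for the diffeomorphism $f$) --- both closings are valid and rest on the same underlying comparison argument.
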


\begin{proof} By definition of $A$, it follows that $2\text{\rm Re}\,A$ is the trace free part of $f^*g_{1}$, so the main claim in the lemma is that
\begin{equation}
\frac{1}{2}\text{Tr}_{g}(f^{*}g_{1}) = 1+|A|^{2}_{g}
\label{eq:trace}
\end{equation}
and this will use the fact that $f$ is harmonic and that $g$ is the Blaschke metric. To check this, it is convenient to let $g_{0}$ be the unique hyperbolic metric
in the same conformal class as $g$, so that $g=e^{2u}g_{0}$, for $u$ a uniquely determined smooth function. Next choose local isothermal coordinates $z$ and $w$ so that
$g_{0}=\sigma_{0}|dz|^2$ and $g_{1}=\sigma_{1}|dw|^2$ (in the following computations, $z$ is chosen near an arbitrary point $p \in M$ and $w$ near $f(p)$). By the definition of trace and a straightforward computation
\[\frac{1}{2}\text{Tr}_{g}(f^{*}g_{1})=e^{-2u}\left(\frac{f^*\sigma_{1}|f_{z}|^2}{\sigma_{0}}+\frac{f^*\sigma_{1}|f_{\bar{z}}|^2}{\sigma_{0}} \right).\]
If we let 
\[\mathcal H:=\frac{f^*\sigma_{1}|f_{z}|^2}{\sigma_{0}},\;\;\;\mathcal L:=\frac{f^*\sigma_{1}|f_{\bar{z}}|^2}{\sigma_{0}}\]
then \cite[(III) p. 453]{Wolf89} gives 
\begin{equation}\label{eq:auxiliary}
	\mathcal H\,\mathcal L=|A|^{2}_{g_{0}}
\end{equation}
and thus
\[\frac{1}{2}\text{Tr}_{g}(f^{*}g_{1})=e^{-2u}\left(\mathcal H+\frac{|A|^2_{g_{0}}}{\mathcal H} \right).\]
Since $|A|^2_{g}:=e^{-4u}|A|^2_{g_{0}}$, if we show that $\mathcal H=e^{2u}$, then \eqref{eq:trace} follows. But the equality of both functions follows from the fact that
 \cite[(VI) p. 453]{Wolf89} gives
 \[\Delta_{g_{0}}\log\mathcal H =2\mathcal H-2\mathcal L-2 = 2\mc{H} - \frac{2 |A|_{g_0}^2}{\mc{H}} - 2,\]
 where we also used \eqref{eq:auxiliary} in the second equality, and $g$ being the Blaschke metric gives (see \eqref{eq:coupled-vortex-equations})
 \begin{equation}
 \Delta_{g_{0}}u=e^{2u}-e^{-2u}|A|_{g_{0}}^{2}-1,
 \label{eq:B}
 \end{equation}
 where we also used the well-known formula for Gaussian curvature of $g = e^{2u} g_0$ (and that $g_0$ is hyperbolic)
 \begin{equation}\label{eq:Gauss-curvature}
 	K_{g}=e^{-2u}(-1-\Delta_{g_{0}}u).
 \end{equation}
Thus $2u=\log\mathcal H$ since there is a unique solution to the PDE \eqref{eq:B} (see \cite[Remark 5.3]{Mettler-Paternain-19}). Using again \eqref{eq:Gauss-curvature}, the PDE \eqref{eq:B} is equivalent to $K_{g}=-1+|A|_{g}^{2}$.
\end{proof}

\begin{Remark} It is possible to recast the bijection $\mathcal G$ and Lemma \ref{lemma:fg1} using AdS geometry; this originates in the work of Mess in \cite{Mess_07}. A pair $([g_{1}]_{\Teich}, [g_{2}]_{\Teich})$ uniquely determines a maximal global hyperbolic AdS structure on $M\times \mathbb{R}$.
It turns out that such an AdS manifold contains a unique Cauchy surface that is {\it maximal}, that is, it has zero mean curvature \cite{BBZ_07}. In addition its first fundamental form matches precisely the Blaschke metric $g$; one may see this by checking that the coupled vortex equations agree with the Gauss-Codazzi equations for the maximal surface. Moreover, the maximal surface comes equipped with the two ``projection" maps that are the unique harmonic diffeomorphisms isotopic to the identity that we described earlier having opposite Hopf differentials. This approach also delivers Lemma \ref{lemma:fg1} as explained in \cite[Section 7]{BS_20}. The bijection $\mathcal G$ may also be constructed using AdS geometry, see \cite[Theorem 3.8]{KS_07}  and \cite{BS_10}.

\label{remark:ads}

\end{Remark}

Since $\RE\, A$ is a real trace-free symmetric $2$-tensor on $M$ we can define for each $x\in M$ a $g$-symmetric trace-free linear map $\A_{x}:T_{x}M\to T_{x}M$ by the identity
\[g_{x}(\A_{x}(\bullet),\bullet)=\RE\, A_{x}(\bullet,\bullet).\]
Observe that $\A_{x}^2=-(\det \A_{x})\,\text{id}$ and it is straightforward to check that 
\begin{equation}\label{eq:detA}
\det\A_{x}=-\frac{1}{2} |\mathrm{Re}(A_x)|_g^2 = -|A_{x}|^{2}_{g}.
\end{equation}
The next lemma is essentially a consequence of Lemma \ref{lemma:fg1}. For brevity, let us set $h:=f^{*}g_{1}$.

\begin{Lemma} For each $x\in M$, the map $\text{\rm id}+\A_{x}:(T_{x}M,h_{x})\to (T_{x}M,g_{x})$ is an orientation preserving linear isometry.
\end{Lemma}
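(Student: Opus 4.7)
The plan is a direct computation in two parts: verify that $\text{id} + \A_x$ pulls $g_x$ back to $h_x$, and then verify that its determinant is positive.

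For the isometry claim, I would expand
\[
g_x\bigl((\text{id}+\A_x)u,\, (\text{id}+\A_x)v\bigr) = g_x(u,v) + g_x(\A_x u, v) + g_x(u, \A_x v) + g_x(\A_x u, \A_x v).
\]
The $g$-symmetry of $\A_x$ collapses the two middle terms to $2g_x(\A_x u, v) = 2\RE\, A_x(u,v)$ by the defining identity of $\A_x$. For the last term, the Cayley--Hamilton relation $\A_x^2 = -(\det \A_x)\,\text{id}$ (valid for any trace-free $2\times 2$ operator) combined with \eqref{eq:detA} gives $g_x(\A_x u, \A_x v) = -(\det\A_x) g_x(u,v) = |A_x|_g^2\, g_x(u,v)$. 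Assembling the three contributions,
\[
g_x\bigl((\text{id}+\A_x)u,\, (\text{id}+\A_x)v\bigr) = \bigl(1 + |A_x|_g^2\bigr) g_x(u,v) + 2\RE\, A_x(u,v),
\]
which is precisely $h_x(u,v)$ by Lemma \ref{lemma:fg1}. This shows $(\text{id}+\A_x)$ is a linear isometry from $(T_xM, h_x)$ to $(T_xM, g_x)$.

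For orientation, I would observe that since $\A_x$ is $g$-symmetric and trace-free, its eigenvalues are $\pm\mu_x$ with $\mu_x^2 = -\det \A_x = |A_x|_g^2$, so they are $\pm |A_x|_g$. Consequently, $\det(\text{id}+\A_x) = 1 - |A_x|_g^2$. The curvature bound \eqref{eq:curvature-condition-coupled-vortex} together with $K_g = -1 + |A|_g^2$ forces $|A_x|_g^2 < 1$, so $\det(\text{id}+\A_x) > 0$, completing the proof.

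I do not expect any genuine obstacle here: the computation is purely pointwise linear algebra, and all the nontrivial inputs (the form of $h$, the identity $\A_x^2 = -\det(\A_x)\text{id}$, the formula for $\det \A_x$, and the strict bound $|A|_g < 1$) are already established in the excerpt. The only small care point is invoking the strict inequality $K_g < 0$ from \eqref{eq:curvature-condition-coupled-vortex} rather than the weak one, to ensure invertibility of $\text{id}+\A_x$ everywhere.
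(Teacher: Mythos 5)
Your proof is correct. The isometry half is essentially identical to the paper's argument (you polarize, the paper evaluates the quadratic form at $v=u$; both reduce to the identity $\A_x^2=-(\det\A_x)\,\mathrm{id}$, the formula \eqref{eq:detA}, and Lemma \ref{lemma:fg1}). The orientation half, however, takes a genuinely different route. You import the strict bound $K_g<0$ from \eqref{eq:curvature-condition-coupled-vortex}, which is quoted in the paper from \cite[Lemma 5.2]{Mettler-Paternain-19}, and conclude $\det(\mathrm{id}+\A_x)=1-|A_x|_g^2>0$ directly; this is logically sound given that the inequality has been recorded earlier. The paper instead runs the implication in the opposite direction: from the already-established fact that $\mathrm{id}+\A_x$ is an isometry (hence invertible) it deduces that $\pm1$ cannot be eigenvalues of the trace-free operator $\A_x$, so $|A_x|_g\neq 1$ everywhere; since $A$ has $4g(M)-4>0$ zeros, $|A|_g$ attains the value $0$, and by continuity $|A|_g<1$ on all of $M$. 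This yields positivity of the eigenvalues $1\pm|A_x|_g$ of $\mathrm{id}+\A_x$ \emph{and} simultaneously furnishes an independent proof of \eqref{eq:curvature-condition-coupled-vortex} for $m=2$, which is part of the point the authors wanted to make. So your argument is shorter but leans on the external reference, while the paper's is self-contained and recovers the curvature bound as a byproduct.
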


\begin{proof} Take $v\in T_{x}M$ and let us compute
\begin{align*}
g_{x}(v+\A_{x}(v),v+\A_{x}(v))=&g_{x}(v,v)+2g_{x}(v,\A_{x}(v))+g_{x}(\A_{x}(v),\A_{x}(v))\\
=&g_{x}(v,v)+ 2\RE \,A_{x}(v,v)+g_{x}(v,\A^{2}_{x}(v))\\
=&g_{x}(v,v)+ 2\RE \,A_{x}(v,v)+|A_{x}|^{2}_{g}\,g_{x}(v,v)\\
=&h_{x}(v,v),
\end{align*}
where in the second to last line we used \eqref{eq:detA}, and in the last line we used the definition of $h$ and Lemma \ref{lemma:fg1}. 

Since $\text{id}+\A_{x}$ is a linear isometry, $\pm 1$ cannot be eigenvalues of $\A_{x}$ (recall that $\A_{x}$ has zero trace). This implies right away
that $K_{g}<0$ or equivalently $|A|_{g}<1$ (thus obtaining a proof of \eqref{eq:curvature-condition-coupled-vortex} for $m=2$). Indeed, if there was a point in $M$ where $|A|_{g} > 1$, since $A$ has $4g(M) - 4 > 0$ zeroes with multiplicity (here $g(M)$ is the genus of $M$), by continuity we can also find a point where $|A|_g = 1$; this is however handled by the previous argument. Hence
the eigenvalues of $\text{\rm id}+\A_{x}$ are positive thus showing that it preserves orientation.
\end{proof}

\begin{Lemma}\label{lemma:inverse} The inverse of $\text{\rm id}+\A_{x}$ is the map
\[\mc{I}_{x}:=-K_{g}(x)^{-1}(\text{\rm id}-\A_{x}).\]
\end{Lemma}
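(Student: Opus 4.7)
The plan is to verify directly that $(\text{id} + \mc{A}_x) \circ \mc{I}_x = \text{id}$ using only two ingredients already established in the excerpt, namely the algebraic identity $\mc{A}_x^2 = -(\det \mc{A}_x)\,\text{id}$ together with \eqref{eq:detA}, and the Blaschke equation $K_g = -1 + |A|_g^2$ (the $m=2$ case of \eqref{eq:coupled-vortex-equations}).

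First I would compute the ``difference of squares'':
\[
(\text{id} + \mc{A}_x)(\text{id} - \mc{A}_x) = \text{id} - \mc{A}_x^2.
\]
Since $\mc{A}_x$ is $g$-symmetric and trace-free, the Cayley--Hamilton identity (or the direct computation already used in the previous lemma) gives $\mc{A}_x^2 = -(\det \mc{A}_x)\,\text{id} = |A_x|_g^2\,\text{id}$ by \eqref{eq:detA}. Therefore
\[
(\text{id} + \mc{A}_x)(\text{id} - \mc{A}_x) = \bigl(1 - |A_x|_g^2\bigr)\,\text{id}.
\]

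Second, I would plug in the Blaschke condition $K_g(x) = -1 + |A_x|_g^2$, which rearranges as $1 - |A_x|_g^2 = -K_g(x)$. Note that the previous lemma already established $|A|_g < 1$ pointwise, so $-K_g(x) > 0$ and division by $K_g(x)$ is legitimate. Dividing both sides by $-K_g(x)$ then yields
\[
(\text{id} + \mc{A}_x)\bigl(-K_g(x)^{-1}(\text{id} - \mc{A}_x)\bigr) = \text{id},
\]
which is exactly the claim. Since we are dealing with endomorphisms of a finite-dimensional vector space, a one-sided inverse is a two-sided inverse, so no separate check is needed.

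There is really no obstacle here: the content is a one-line algebraic manipulation, and the only conceptual point is to recognize that the Blaschke equation is precisely what turns $1 - |A|_g^2$ into $-K_g$, which is why the normalization factor in $\mc{I}_x$ takes this particular form. The lemma can therefore be viewed as an algebraic incarnation of the coupled vortex equation on the tangent spaces of $M$.
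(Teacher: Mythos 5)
Your proof is correct and is essentially the paper's own argument: both rest on the single computation $(\mathrm{id}+\A_x)(\mathrm{id}-\A_x)=\mathrm{id}-\A_x^2=(1-|A_x|_g^2)\,\mathrm{id}=-K_g(x)\,\mathrm{id}$, using \eqref{eq:detA} and the Blaschke equation. Your added remarks (that $-K_g>0$ justifies the division, and that a one-sided inverse suffices in finite dimensions) are fine but not needed beyond what the paper already records.
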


\begin{proof} This is immediate from 
$$(\text{\rm id}+\A_{x})(\text{\rm id}-\A_{x})=\text{id}-\A_{x}^{2}=(1-|A_{x}|_{g}^2)\,\text{id}=-K_{g}(x)\,\text{id}.$$

\end{proof}

If we now let $x$ vary in $M$, the lemmas give a principal bundle isomorphism
\[\mc{I}:SM_{g}\to SM_{h}\]
where $SM_{g}$ and $SM_h$ denote the unit circle bundles of $g$ and $h$, respectively. This map will provide the conjugacy claimed in Theorem \ref{thm:main}.
Let $\{\alpha_{1},\beta_{1},\gamma_{1}\}$ (resp. $\{X_1, H_1, V_1\}$) and $\{\alpha, \beta, \psi\}$ (resp. $\{X, H, V\}$) denote the coframes (resp. frames) of 1-forms in $SM_{h}$ and $SM_g = SM$, respectively, constructed in \S \ref{ssec:geometry}. Write $\pi: SM \to M$ for the projection and recall that $\lambda=\text{Im}(\pi^*_{2}A)$.

\begin{Lemma}\label{lemma:computation} We have:
\begin{align*}
		\mc{I}^*\alpha_1 &= \left(1 + \frac{1}{2} V\lambda\right)\alpha - \lambda \beta,\\
		\mc{I}^*\beta_1 &= - \lambda \alpha + \left(1 - \frac{1}{2} V\lambda\right)\beta,\\
		\mc{I}^* \psi_1 &= \psi.
	\end{align*}
	\label{lemma:key}
\end{Lemma}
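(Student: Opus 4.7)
The plan is to compute $\mc{I}^*\alpha_1$ and $\mc{I}^*\beta_1$ by a direct pointwise argument, and then derive $\mc{I}^*\psi_1 = \psi$ from the structure equations on $SM_h$ pulled back to $SM_g$. Recall that for any Riemannian metric $\sigma$ on $M$, the tautological 1-forms on $SM_\sigma$ admit the geometric characterization $\alpha_\sigma(\cdot) = \sigma_x(d\pi(\cdot), v)$ and $\beta_\sigma(\cdot) = \sigma_x(d\pi(\cdot), J_\sigma v)$ at $(x,v) \in SM_\sigma$, where $J_\sigma$ is the $\pi/2$ rotation compatible with the orientation. For $(x,v) \in SM_g$, set $w := \mc{I}_x v$; by Lemma \ref{lemma:inverse} one has $(\mathrm{id}+\A_x) w = v$, and because $\mathrm{id}+\A_x \colon (T_xM, h_x) \to (T_xM, g_x)$ is an orientation-preserving linear isometry it intertwines the complex structures, so $(\mathrm{id}+\A_x)(J_h w) = J_g v = iv$. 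Combining these with $\pi \circ \mc{I} = \pi$, for any $\zeta \in T_{(x,v)} SM_g$,
\[
(\mc{I}^* \alpha_1)(\zeta) = h_x(d\pi(\zeta), w) = g_x\bigl((\mathrm{id}+\A_x) d\pi(\zeta), v\bigr) = \alpha(\zeta) + \RE(A_x)(d\pi(\zeta), v),
\]
with the analogous identity for $\mc{I}^*\beta_1$ (replace $v$ by $iv$ in the second slot). Decomposing $d\pi(\zeta) = \alpha(\zeta) v + \beta(\zeta) iv$ and reading off from \eqref{eq:reA} the values $\RE(A_x)(v,v) = V\lambda/2$, $\RE(A_x)(v,iv) = -\lambda$, $\RE(A_x)(iv,iv) = -V\lambda/2$ yields the first two formulas of the lemma.

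For the third formula I would pull back the structure equations $d\alpha_1 = \psi_1 \wedge \beta_1$ and $d\beta_1 = -\psi_1 \wedge \alpha_1$ and compute $d\mc{I}^*\alpha_1$ and $d\mc{I}^*\beta_1$ directly from the formulas just obtained, expanding $d\lambda = (X\lambda)\alpha + (H\lambda)\beta + (V\lambda)\psi$ and $d(V\lambda) = (XV\lambda)\alpha + (HV\lambda)\beta + (V^2\lambda)\psi$ and using the structure equations \eqref{eq:surface-geometry2} on $SM_g$. Substituting the holomorphicity identities \eqref{eq:Ahol} together with the relation $V^2 \lambda = -4\lambda$ (which follows from $V(\pi_2^*A) = 2i\pi_2^*A$ on $\mho_2$), the $\alpha \wedge \beta$ contributions should cancel, leaving
\[
d\mc{I}^*\alpha_1 = \psi \wedge \mc{I}^*\beta_1, \qquad d\mc{I}^*\beta_1 = -\psi \wedge \mc{I}^*\alpha_1.
\]
Hence $(\mc{I}^*\psi_1 - \psi) \wedge \mc{I}^*\alpha_1 = (\mc{I}^*\psi_1 - \psi) \wedge \mc{I}^*\beta_1 = 0$, and since $\mc{I}^*\alpha_1, \mc{I}^*\beta_1$ are pointwise linearly independent, this forces $\mc{I}^*\psi_1 = \psi$.

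The main obstacle I anticipate is the careful bookkeeping needed for the cancellation of the $\alpha \wedge \beta$ terms in $d\mc{I}^*\alpha_1$ and $d\mc{I}^*\beta_1$. This cancellation depends crucially on both the holomorphicity of $A$ and its degree being $m=2$, so it is precisely through this computation that the coupled vortex structure of the pair $(g, A)$ enters the proof of the identity $\mc{I}^*\psi_1 = \psi$.
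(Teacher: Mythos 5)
Your computation of $\mc{I}^*\alpha_1$ and $\mc{I}^*\beta_1$ is correct and is essentially the paper's argument: the paper moves $(\mathrm{id}+\A_x)=\mc{I}_x^{-1}$ into one slot of $h_x$ via the isometry property rather than applying the isometry to both slots, but the content is identical, and the values of $\RE A_x$ on $v, iv$ that you read off from \eqref{eq:reA} are right. For $\mc{I}^*\psi_1=\psi$ you take a mildly different route. The paper first uses that $\mc{I}$ commutes with the fibrewise rotation to get $\mc{I}_*V=V_1$, hence $\mc{I}^*\psi_1=\psi+a\alpha+b\beta$, and then only compares the $\alpha\wedge\beta$ coefficients of the pulled-back structure equations; with \eqref{eq:Ahol} this yields a homogeneous $2\times2$ system for $(a,b)$ whose determinant is $-K_g>0$, forcing $a=b=0$. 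Your version instead verifies $d\mc{I}^*\alpha_1=\psi\wedge\mc{I}^*\beta_1$ and $d\mc{I}^*\beta_1=-\psi\wedge\mc{I}^*\alpha_1$ outright and concludes by linear independence of $\mc{I}^*\alpha_1,\mc{I}^*\beta_1$; this works, and the bookkeeping you flag does close up: the $\alpha\wedge\beta$ coefficients cancel by \eqref{eq:Ahol} with $m=2$, and the $\psi\wedge\alpha$, $\psi\wedge\beta$ coefficients cancel using $V^2\lambda=-4\lambda$. The trade-off is that your route needs that extra identity and a check of all three coefficients, whereas the paper's needs the observation $\mc{I}_*V=V_1$ but only one coefficient comparison per equation; either way the coupled vortex structure enters exactly where you say it does.
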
	
	
\begin{proof} Let us compute for $(x,v)\in SM$ and $\xi\in T_{(x,v)}SM$:
	\begin{align*}
		(\mc{I}^*\alpha_1)_{(x, v)}(\xi) &= (\alpha_{1})_{(x, \mc{I}_{x}(v))} (d\mc{I}(\xi)) = h_{x}(\mc{I}_{x}(v), d\pi (\xi))\\
		&= g_{x}(v, \mc{I}_{x}^{-1} d\pi(\xi))\\
		&=g_x\left(v, d\pi(\xi) + \A_{x}( d\pi(\xi))\right)\\
		&= \alpha_{(x, v)}(\xi) + \RE\, A_{x}(v, d\pi(\xi)) = \alpha_{(x, v)}(\xi) + \RE\, \pi^* A_{x}(X, \xi),
	\end{align*}
	where in the second line we used that $\mc{I}_{x}$ is an isometry, in the third line Lemma \ref{lemma:inverse}, and in the last equality the definition of $\mathbb{A}_x$ and that $d\pi(X) = v$.  Using \eqref{eq:reA} yields
	
		\[
		\mc{I}^*\alpha_1 = \alpha + \iota_X \pi^*\RE\, A = \left(1 + \frac{1}{2} V\lambda\right) \alpha - \lambda \beta. 
	\]
	Similarly we compute, denoting $i$ and $i_1$ the rotations by $\frac{\pi}{2}$ of $g$ and $h$, respectively:
	\begin{align*}
		(\mc{I}^*\beta_1)_{(x, v)}(\xi) &= (\beta_1)_{(x, \mc{I}_{x}(v))} (d\mc{I}(\xi)) = h_{x}(i_1\mc{I}_{x}(v), d\pi(\xi))\\
		&= g_{x}(i v, \mc{I}_{x}^{-1} d\pi(\xi))\\
		&=g_{x}(i v, d\pi(\xi) + \A_{x}( d\pi(\xi)))\\
		&= \beta_{(x, v)}(\xi) + \RE\, A_{x}(iv, d\pi(\xi)) = \beta_{(x, v)}(\xi) + \RE\, \pi^* A_{x}(H, \xi).
	\end{align*}
	where again we used that $\mc{I}_x$ is an isometry, Lemma \ref{lemma:inverse}, as well as $\mc{I} i = i_1 \mc{I}$ since $\mc{I}$ commutes with rotations, and that $d\pi(H) = iv$. Thus using \eqref{eq:reA} again:
	\[
		\mc{I}^*\beta_1 = -\lambda \alpha + \left(1 - \frac{1}{2} V\lambda\right) \beta.
	\]
	
	Last we turn to $\mc{I}^*\psi_1$. As $\mc{I}$ commutes with the rotations, we have $\mc{I}^*V_1 = V$, so we may write
	\[\mc{I}^*\psi_1 = \psi + a \alpha + b \beta\]
	 for some functions $a$ and $b$. We will show that $a=b=0$ thus completing the proof. By the structure equations \eqref{eq:surface-geometry2}:
	\begin{align*}
		&\mc{I}^* d\alpha_1 = d \left(\left(1 + \frac{1}{2}V\lambda\right) \alpha - \lambda \beta\right)\\
		&=\mc{I}^* (\psi_1 \wedge \beta_1) = (\psi + a\alpha + b \beta) \wedge \left(-\lambda \alpha + \left(1 - \frac{1}{2}V\lambda\right) \beta\right).
	\end{align*}
	Equating the coefficients of $\alpha \wedge \beta$ we get
	\[
		0 = \left(-H\left(1 + \frac{1}{2}V\lambda\right) - X\lambda\right) \alpha \wedge \beta = \left(a\left(1 - \frac{1}{2}V\lambda\right) + \lambda b\right) \alpha \wedge \beta,
	\]
	where the first equality holds by \eqref{eq:Ahol}. Similarly,
	\begin{align*}
		&\mc{I}^* d\beta_1 = d \left(-\lambda \alpha + \left(1 - \frac{1}{2}V\lambda\right) \beta\right)\\
		& =- \mc{I}^*(\psi_1 \wedge \alpha_1) = (\psi + a\alpha + b \beta) \wedge \left(\left(1 + \frac{1}{2}V\lambda\right) \alpha -\lambda \beta\right).
	\end{align*}
	Equating the coefficients of $\alpha \wedge \beta$ we get
	\[
			0 = \left(H\lambda + X\left(1 - \frac{1}{2}V\lambda\right)\right) \alpha \wedge \beta = \left(a\lambda + b \left(1 + \frac{1}{2}V\lambda\right)\right) \alpha \wedge \beta,
	\]
	where again the first equality holds by \eqref{eq:Ahol}, and we used \eqref{eq:surface-geometry2}. This implies
	\[
		0 = \begin{pmatrix}
			1 - \frac{1}{2}V\lambda & \lambda\\
			\lambda & 1 + \frac{1}{2}V\lambda
		\end{pmatrix} \begin{pmatrix}
		a\\
		b
		\end{pmatrix}
	\]
	The determinant of the preceding matrix is equal to $1 - \frac{1}{4}(V\lambda)^2 - \lambda^2 = -K_g > 0$ where we used \eqref{eq:coupled-vortex-equations}, \eqref{eq:curvature-condition-coupled-vortex}, and \eqref{eq:norm}, so $a = b = 0$ as claimed.
\end{proof}

\subsection{Proof of Theorem \ref{thm:main}} We are now ready to put the pieces together and complete the proof of our main result. By Proposition \ref{prop:Ghomeo} the map $\mc{G}$ is a bijection. If $\mc{G}([\sigma]_{\conf}, A)=([g_{1}]_{\Teich}, [g_{2}]_{\Teich})$ by definition we know that $A$ is the Hopf differential of the unique harmonic diffeomorphism $f:(M, [\sigma]_{\conf})\to (M,g_{1})$ isotopic to the identity (and $-A$ is the Hopf differential of the unique harmonic diffeomorphism from $(M,[\sigma]_{\conf})$ to $(M,g_{2})$).
Obviously $h=f^*g_{1}$ is isometric to $g_1$ and its geodesic flow has weak stable foliation conjugate to that of $g_{1}$ via the lift of $f$. Next
we claim that the principal bundle isomorphism $\mc{I}:SM_{g}\to SM_{h}$ conjugates the weak foliation of $F=X+\lambda V$ with the weak foliation of $X_{1}$, the geodesic vector field
of $h$. Indeed Lemma \ref{lemma:key} implies

	\begin{align*}
		\mc{I}^*(\beta_1 + \psi_1) &= -\lambda \alpha + \left(1 - \frac{1}{2}V\lambda\right) \beta + \psi\\
		&=-\lambda\alpha-r^s\beta+\psi,
	\end{align*}
	where in the second line we used the notation of \eqref{eq:rus}, and hence (see \eqref{eq:weakbundles})
	\[\ker \mc{I}^*(\beta_1 + \psi_1)=\mathbb{R}F\oplus \mathbb{R}(H+r^{s}V).\]
	But for the hyperbolic metric $h$, the weak stable bundle is given by $\ker (\beta_{1}+\psi_{1})$ and the claim follows; the constructed conjugacy is explicitly $df \circ \mc{I}: SM_g \to SM_{g_1}$. Running the same construction with $-A$ and $g_2$
	(which amounts to switching $\lambda$ by $-\lambda$) we obtain a conjugacy between weak unstable foliations thus proving Items a, b, and c in Theorem \ref{thm:main}.
	The proof of Item d is a direct consequence of \cite[Theorem 5.5]{Mettler-Paternain-19}. Finally, Item e is proved in Section \ref{section:mls} below.
		\qed

	\begin{Remark} The explicit map $\mc{I}$ provides an alternative and simpler way to compute the resonant 1-forms at zero for $F$; these were originally computed in \cite[Theorem 1.5 and Section 8]{CP22} using vertical Fourier analysis. Indeed, if we let $\text{Res}_{0}^{1}(g_{2})$ denote the space of resonant 1-forms of $g_{2}$ at zero in the kernel of contraction by the geodesic vector field (a well-known object), then the analogous space of resonant $1$-forms $\mathrm{Res}_0^1(F)$ corresponding to $F$ is given by pull-back by the map $\mc{I}$ corresponding to $-A$:
	\[\text{Res}_{0}^{1}(F)=\mc{I}^*\text{Res}_{0}^{1}(g_{2}).\]
	Similarly for the co-resonant 1-forms of $F$ and the metric $g_{1}$. This gives right away that the winding cycles of the two SRB measures of $F$ vanish (something that is not obvious a priori) and that the dimension of resonant (and co-resonant) 1-forms is given by the first Betti number of $M$.

	\end{Remark}

\section{The marked length spectrum of the pair $([g_{1}]_{\Teich}, [g_{2}]_{\Teich})$.}\label{section:mls}
In this final section we use the conjugacy $\mc{I}$ constructed in the preceding section to prove \eqref{eq:mls1} and \eqref{eq:mls2} and we provide some final remarks. We recall that $\mc{G}([\sigma]_{\conf}, A) = ([g_1]_{\Teich}, [g_2]_{\Teich})$, which means that there is a unique harmonic diffeomorphism $f: (M, [\sigma]_{\conf}) \to (M, g_1)$ isotopic to the identity such that $(f^*g_1)^{2, 0} = A$; write $h = f^*g_1$.

\subsection{Proof of \eqref{eq:mls1} and \eqref{eq:mls2}}
Fix a non-trivial free homotopy class $\mathfrak{c}$ in $M$. To prove \eqref{eq:mls1} and \eqref{eq:mls2}, it suffices to consider a primitive class $\mathfrak{c}$ which we assume from now on (primitive here means that $\mathfrak{c}$ is not a power of order two or higher of another free homotopy class). Let $\tau$ and $\zeta$ denote the unique closed orbits of the geodesic flow of $h$ and of the flow generated by $F$, respectively, whose projections to $M$ belong to $\mathfrak{c}$. Indeed, the existence of unique closed geodesics of $(M, h)$ in the class $\mathfrak{c}$ is classical, see for instance \cite[Theorem 3.8.14]{Klingenberg-95}; the existence of the unique periodic orbit $\zeta$ can be justified as follows.  We regard quasi-Fuchsian flows as flows on the unit tangent bundle $T_{1}M$ of positive half-lines. Since they are parametrized by $\T(M)\times \T(M)$ which is a connected set, structural stability of Anosov flows implies that any quasi-Fuchsian is topological orbit equivalent to the geodesic flow of a hyperbolic metric $g_{0}$ via a H\"older homeomorphism isotopic to the identity. By existence and uniqueness of closed geodesics with respect $g_0$ in each free homotopy class in $M$, we then get the desired property on $SM_g$ and the thermostat flow.

Let $W^{ws}(\tau)$ and $W^{ws}(\zeta)$ be the corresponding weak stable leaves of $\tau$ and $\zeta$. Using that the strong stable leaves are immersed copies of $\mathbb{R}$, as well as the fact that the stable bundle of quasi-Fuchsian flows is orientable (see \cite[Section 8.3]{CP22} for a formula for the vector field spanning the stable bundle) we have the dichotomy: a weak stable leaf of $F$ is either an immersed cylinder (i.e. a copy of $\mathbb{S}^1 \times \mathbb{R}$) or an immersed plane (i.e. a copy of $\mathbb{R}^2$). The former possibility is valid if and only if the weak stable leaf contains a closed orbit.

We claim that $\mc{I}$ maps $W^{ws}(\zeta)$ (diffeomorphically) to $W^{ws}(\tau)$. Indeed, $\mc{I}(W^{ws}(\zeta))$ is a weak stable leaf which is an immersed cylinder so it contains a closed orbit $\tau'$ of the geodesic flow of $h$. Since $\tau'$ and $\mc{I}(\zeta)$ generate the fundamental group of $\mc{I}(W^{ws}(\zeta))$, they are homotopic up to orientation through curves in $\mc{I}(W^{ws}(\zeta))$. However, by Lemma \ref{lemma:computation} we get 
\begin{align*}
	\mc{I}_*(F) &= \mc{I}_*\left(1 + \frac{1}{2}V\lambda\right) X_1 - \mc{I}_*\lambda (H_1 - V_1),\\
	\mc{I}_*(H + r^sV) &= - \mc{I}_*\lambda\, X_1 - \mc{I}_*\left(-1 + \frac{1}{2} V\lambda\right) (H_1 - V_1).
\end{align*}
It is easily computed that the Jacobian of $\mc{I}: W^{ws}(\zeta) \to W^{ws}(\tau)$ (where the two weak stable leaves are equipped with the orientations induced by the natural frames) is equal to $-K_g > 0$, so $\mc{I}$ is orientation preserving. Hence we get that in fact $\tau'$ and $\mc{I}(\zeta)$ are homotopic as curves with the same orientation. Since $\mc{I}(\zeta)$ has the same projection as $\zeta$ to $M$ (recall $\mc{I}$ covers the identity), we conclude that projection of $\tau'$ belongs to the class $\mathfrak{c}$. Thus by uniqueness of closed orbits we get $\tau' = \tau$, and the claim is proved.

Since $\iota_{X_{1}}d\alpha_{1}=0$, it follows that $\alpha_{1}|_{W^{ws}(\tau)}$ is a closed $1$-form in $W^{ws}(\tau)$. Thus $\mc{I}^*\alpha_{1}|_{W^{ws}(\zeta)}$ is a closed $1$-form in $W^{ws}(\zeta)$ and using that $\mc{I}(\zeta)$ and $\tau$ are homotopic through curves in $W^{ws}(\tau)$ we have
 \[\int_{\zeta}\mc{I}^{*}\alpha_{1} = \int_{\mc{I}(\zeta)} \alpha_1 = \int_{\tau}\alpha_{1}.\]
The right hand side is simply $\ell_h(\mathfrak{c}) = \ell_{g_{1}}(\mathfrak{c})$ (here we use that the harmonic map $f$ is isotopic to the identity) and the left hand side can be computed using Lemma \ref{lemma:key} to obtain
\[\mc{I}^{*}\alpha_{1}(F)=1+\frac{V\lambda}{2}.\]
Thus
\[\int_{\zeta}\mc{I}^*\alpha_{1}=\ell_{g}(\pi \circ \zeta)+\frac{1}{2}\int_{\zeta}V\lambda=\ell_{g_{1}}(\mathfrak{c})\]
which proves \eqref{eq:mls1}. The proof of \eqref{eq:mls2} is analogous.

\subsection{Equality of marked length spectra and Proof of \eqref{eq:arithmetic-mean}} Observe that equations  \eqref{eq:mls1} and \eqref{eq:mls2} imply that the thermostat flows associated with $(g,A)$ and $(g,-A)$ have the same marked length spectrum in the sense that the periods of closed orbits in a given free homotopy class match. More precisely, let $\mathfrak{c}$ be a free homotopy class on $M$, and let $\zeta_{\pm}$ be periodic orbits of the flow generating by $F_\pm := X \pm \lambda V$ such that $\pi \circ \zeta_{\pm}$ belongs to the class $\mathfrak{c}$; here $\lambda = \im(\pi_2^*A)$. Using that 
\[
	\mc{G}([\sigma]_{\conf}, A) = ([g_1]_{\Teich}, [g_2]_{\Teich}), \quad \mc{G}([\sigma]_{\conf}, -A) = ([g_2]_{\Teich}, [g_1]_{\Teich}),
\]
equations \eqref{eq:mls1} and \eqref{eq:mls2} imply
\begin{align*}
	\ell_g(\pi \circ \zeta_{+}) &= \ell_{g_1}(\mathfrak{c}) - \frac{1}{2} \int_{\zeta_+} V\lambda,\quad &\ell_g(\pi \circ \zeta_+) = \ell_{g_2}(\mathfrak{c}) + \frac{1}{2} \int_{\zeta_+} V\lambda,\\
	\ell_g(\pi \circ \zeta_{-}) &= \ell_{g_2}(\mathfrak{c}) + \frac{1}{2} \int_{\zeta_-} V\lambda,\quad &\ell_g(\pi \circ \zeta_-) = \ell_{g_1}(\mathfrak{c}) - \frac{1}{2} \int_{\zeta_-} V\lambda,
\end{align*}
Manipulating the first and the second equality, as well as the third and the fourth one, we get
\begin{equation}\label{eq:flip}
	\int_{\zeta_+} V\lambda = \ell_{g_1}(\mathfrak{c}) - \ell_{g_2}(\mathfrak{c}) = \int_{\zeta_-} V\lambda
\end{equation}
as well as
\begin{equation*}
	\ell_g(\pi \circ \zeta_+) = \frac{1}{2} (\ell_{g_1}(\mathfrak{c}) + \ell_{g_2}(\mathfrak{c})) = \ell_g(\pi \circ \zeta_-).
\end{equation*}
The latter equality says that the periods of $\zeta_\pm$ agree.

Since there is a H\"older orbit equivalence between the flows generated by $F_\pm$, a well-known application of the Liv\v{s}ic theorem gives that the flows are in fact H\"older conjugate (the conjugacy maps $\zeta_{+}$ to $\zeta_{-}$). This conjugacy {\it cannot} be upgraded to a smooth conjugacy as long as $A\neq 0$. Indeed, if we had a smooth conjugacy between the flows of $F_\pm$, then the weak stable foliations of the geodesic flows of $g_1$ and $g_2$ would be conjugate. In particular, the holonomies of the two foliations (i.e. the holonomy of foliated bundles, see \cite[Chapter 2.1]{CC_00}) are conjugate, and these holonomies can be shown to agree with the representations of $\pi_1(M) \to \mathrm{PSL}(2, \mathbb{R})$ given by the respective hyperbolic structures. This implies that $[g_{1}]_{\Teich} = [g_{2}]_{\Teich}$ and thus $A=0$ by Theorem \ref{thm:main}, Item d. The question of when a continuous conjugacy between dissipative Anosov flows on $3$-manifolds is smooth is analyzed in the recent preprint \cite{GLRH_24}. Note that the H\"older conjugacy for quasi-Fuchsian flows must in fact swap SRB measures. This is a consequence of the observation that $r^{u}(g,A)=-r^{s}(g,-A)$ together with \eqref{eq:flip}, the fact that $\zeta_{+}$ and $\zeta_{-}$ have the same period, and Bowen's formula. This phenomenon is an obstruction to smooth conjugacy of transitive Anosov flows as explained in \cite{GLRH_24}.

\bibliographystyle{alpha}
\bibliography{Biblio}

\end{document}